\documentclass[10pt]{amsart}

\usepackage[numbers,sort]{natbib}
\usepackage{amsmath,amssymb,amsthm,mathtools,graphicx,microtype,xcolor,charter}
\usepackage{bm}
\usepackage{array,hyperref,enumitem}
\usepackage{orcidlink,cleveref,float}
\usepackage{booktabs}
\usepackage[margin=.91in]{geometry}
\hypersetup{
  hidelinks,
  pdftitle={Threshold dynamics for subdiffusive grain growth},
  pdfauthor={Marvin Fritz}
}

\newcolumntype{M}[1]{>{\centering\arraybackslash}m{#1}}

\theoremstyle{plain}
\newtheorem{theorem}{Theorem}[section]
\newtheorem{lemma}[theorem]{Lemma}

\newtheorem{proposition}[theorem]{Proposition}

\theoremstyle{definition}
\newtheorem{definition}[theorem]{Definition}

\theoremstyle{remark}
\newtheorem{remark}[theorem]{Remark}

\crefname{theorem}{theorem}{theorems}
\Crefname{theorem}{Theorem}{Theorems}
\crefname{lemma}{lemma}{lemmas}
\Crefname{lemma}{Lemma}{Lemmas}
\crefname{corollary}{corollary}{corollaries}
\Crefname{corollary}{Corollary}{Corollaries}
\crefname{proposition}{proposition}{propositions}
\Crefname{proposition}{Proposition}{Propositions}
\crefname{definition}{definition}{definitions}
\Crefname{definition}{Definition}{Definitions}
\crefname{example}{example}{examples}
\Crefname{example}{Example}{Examples}
\crefname{remark}{remark}{remarks}
\Crefname{remark}{Remark}{Remarks}
\crefname{notation}{notation}{notations}
\Crefname{notation}{Notation}{Notations}
\crefname{assumption}{assumption}{assumptions}
\Crefname{assumption}{Assumption}{Assumptions}
\crefname{problem}{problem}{problems}
\Crefname{problem}{Problem}{Problems}

\newcommand{\Om}{\Omega}

\newcommand{\R}{\mathbb{R}}
\newcommand{\T}{\mathbb{T}}
\newcommand{\Caputo}{\partial_t^\alpha}
\newcommand{\Gammafun}{\Gamma}
\newcommand{\1}{\mathbf{1}}
\newcommand{\dd}{\,\textup{d}}

\begin{document}

\title[Threshold dynamics for subdiffusive grain growth]{\Large Threshold dynamics for subdiffusive grain growth}

\author[Marvin Fritz]{Marvin Fritz\orcidlink{0000-0002-8360-7371}}
\address{Faculty of Mathematics, University of Vienna, Vienna, Austria}

\begin{abstract}
We propose a new fractional threshold-dynamics algorithm for subdiffusive interface motion and apply it to multiphase grain-growth simulations. The method combines the L1 discretization of the Caputo time derivative with a Helmholtz-resolvent thresholding step. At each time level, the L1 convolution produces a convex history average of the previously thresholded states; this history-dependent field is then propagated by a Helmholtz diffusion solve and projected back to pure phases by thresholding. For $\alpha=1$, the method reduces to a Helmholtz-resolvent analogue of the Merriman--Bence--Osher scheme, whereas for $0<\alpha<1$ it introduces {a subdiffusive temporal memory}.
{We establish a variational characterization, maximum and comparison principles, and stability for the continuous Helmholtz diffusion step. We also derive a memory-tail estimate and a finite-dimensional pinning criterion under an explicit resolvent-contraction hypothesis. The equal-tension multiphase scheme is implemented on periodic grids. The numerical experiments investigate the joint influence of the L1 history and the smoothing length. The fixed-step runs show faster coarsening for smaller fractional orders in the tested regime, while a comparison at fixed smoothing length examines the influence of the L1 history on successive updates.}
\end{abstract}

\keywords{
subdiffusion;
fractional derivatives;
L1 scheme;
threshold dynamics;
MBO scheme;
Helmholtz resolvent;
mean curvature flow;
grain growth}


\maketitle
\vspace{-.5cm}

\section{Introduction}\label{sec:intro}

The classical
Merriman--Bence--Osher (MBO) algorithm evolves a set by alternating a heat step
and a thresholding step, and it approximates motion by mean curvature in the
sharp-interface limit \cite{MerrimanBenceOsher1992,Evans1993,BarlesGeorgelin1995}.
This threshold scheme can be interpreted formally as a singular
operator splitting of the Allen--Cahn equation: the heat part is evolved, while the fast
reaction toward the pure phases is replaced by instantaneous thresholding \cite{Modica1987,Ilmanen1993,BronsardKohn1991,LauxSimon2018}.

Our main
object is a history-dependent thresholding scheme.  Starting from the L1
approximation {\cite{LinXu2007}} of the Caputo derivative, the previous state in the MBO
heat step is replaced by a convex L1 history average of all previous
thresholded states.  One then solves a Helmholtz diffusion problem forced by
this history average and thresholds the result.  {For $\alpha=1$, the history average reduces to the most recent
thresholded state and the diffusion operator is the Helmholtz resolvent
$(I-h\Delta)^{-1}$. The resulting method is a resolvent analogue of the
classical MBO algorithm, whose diffusion step uses the heat semigroup
$e^{h\Delta}$.}  For
$0<\alpha<1$, the scheme retains the geometric simplicity of threshold
dynamics while introducing a subdiffusive temporal memory.


Fractional derivatives are nonlocal in time and arise in anomalous
diffusion, trapping and heavy-tailed waiting-time models
\cite{MontrollWeiss1965,ScherMontroll1975,BouchaudGeorges1990,MetzlerKlafter2000}.
Moving-boundary problems with anomalous or subdiffusive transport can behave
quite differently from their integer-order analogues as discussed in \cite{GruberVoglMiksisDavis2013}.
Moreover, the memory considered here is temporal, that is, the diffusion operator remains local in
space and the new difficulty is how to encode past motion of an interface that
itself changes in time.

The present work is also related to recent computational approaches for fractional diffusion and phase-field/interface models. L1-type discretizations of Caputo derivatives remain a standard tool for time-fractional diffusion problems, and recent variants \cite{jiang2024efficient,wang2025numerical} combine such temporal discretizations with efficient spatial solvers and stability or convergence analysis. In parallel, structure-preserving schemes for Allen--Cahn-type phase-field models have been developed in \cite{poochinapan2022numerical,uzunca2023linearly} to retain maximum-principle or energy-dissipation properties at the discrete level. The algorithm introduced here differs from these diffuse-interface approaches: the memory enters through an L1 history average in a threshold-dynamics framework, and the post-diffusion thresholding step enforces pure phases directly.

{We derive the variational characterization of the Helmholtz diffusion
step and establish its stability and scalar maximum/comparison principles.
The convex L1 history representation connects these resolvent properties to
the fractional thresholding construction. Section~\ref{sec:prelim} recalls
the underlying L1 identities and convolution positivity.}

The numerical experiments illustrate the thresholding scheme.  In the equal-tension
multiphase case, each phase indicator is diffused from its own L1 history
average and the new partition is obtained by an argmax rule. {The periodic grain-growth experiments examine coarsening, grain
geometry, and the dependence of the computed evolution on fractional order
and smoothing length.}  

\paragraph{Organization.}
Section~\ref{sec:prelim} discusses the setting and recalls the fractional
{derivative, the L1 weights, and the standard convolution positivity estimate.}
Section~\ref{sec:mbo} defines the history-aware fractional MBO scheme and proves
the variational and comparison properties of its diffusion step.  Furthermore,
the L1 subdiffusion structure behind the thresholding step is explained.
{Section~\ref{sec:structural} examines the interaction between Helmholtz
smoothing and temporal memory through the memory-tail estimate, the pinning
criterion with memory, and the first-step normalization of the Helmholtz map.}
Section~\ref{sec:implementation} contains the multiphase implementation and
numerical grain-growth experiments.

\section{Setting and preliminaries}\label{sec:prelim}

We work on the flat torus
$
\Om := [0,\Lambda)^d \cong \T^d
$
with periodic boundary conditions.
All spatial integrals are with respect to Lebesgue measure.
Unless stated otherwise, functions are vector-valued with values in $\R^N$.

\subsection{Fractional derivative and L1 time discretization}
Fix $\alpha\in(0,1)$.
For sufficiently regular $u:[0,T]\to X$, where $X$ is a Banach space, the Caputo derivative is
\begin{equation}\label{eq:Caputo}
\Caputo u(t)
:=
\frac{1}{\Gammafun(1-\alpha)}
\int_0^t (t-s)^{-\alpha}\,\partial_s u(s)\dd s,
\end{equation}
see, e.g., \cite[Definition 2.9]{li2018some}.
The crucial feature is the nonlocal history dependence:
$\Caputo u(t)$ depends on the entire past $\{u(s):0\le s\le t\}$.

This can be further seen in its time discretization:
Let $t_n:=nh$ for $n=0,\dots,N_T$, where $N_T h=T$.
Then the L1 formula is the classical piecewise-linear-in-time approximation of the Caputo derivative; see, for example, \cite{LinXu2007,JinLazarovZhou2016,LiaoLiZhang2018,fritz2024well,fritz2026time}.
Define the L1 weights
\begin{equation}\label{eq:b-weights}
b_k := (k+1)^{1-\alpha}-k^{1-\alpha},\qquad k\ge 0.
\end{equation}
It can be directly seen that the weights $(b_k)_{k\ge 0}$ satisfy
\begin{equation}\label{eq:b-basic}
b_k>0,\qquad b_{k+1}\le b_k,\qquad b_k\sim (1-\alpha)\,k^{-\alpha}\qquad (k\to\infty).
\end{equation}
All analytical statements involving the L1 weights are stated for
$0<\alpha<1$, unless a limiting case is explicitly mentioned.  When we use
$\alpha=1$, we use the classical limiting convention
\begin{equation*}
b_0=1,\qquad b_k=0\quad(k\ge1).
\end{equation*}
With this convention the history average reduces to
$\bar v^{\,n-1}=v^{n-1}$, and
$
\ell_{1,h}^2=h$.
Thus the case $\alpha=1$ is the implicit-Euler Helmholtz-threshold reference
scheme.
For a sequence $(v^n)_{n\ge 0}$, we define the L1 discrete Caputo operator {as in \cite{LinXu2007}} by
\begin{equation}\label{eq:L1op}
(\partial_h^\alpha v)^n
:=
\frac{1}{\Gammafun(2-\alpha)\,h^\alpha}
\sum_{k=0}^{n-1} b_k\,(v^{n-k}-v^{n-k-1}),
\qquad n\ge 1.
\end{equation}
We also use the increment notation
$\delta v^n := v^n-v^{n-1}$ for $n\ge 1$

{The following L1 history representation is recalled from
\cite[Eqs.~(3.7)--(3.9)]{LinXu2007}.}
\begin{lemma}[{L1 increment and history identities}]\label{lem:L1conv}
For every $n\ge 1$,
\begin{equation}\label{eq:L1-increment-form}
(\partial_h^\alpha v)^n
=
\frac{1}{\Gammafun(2-\alpha)\,h^\alpha}
\sum_{\ell=1}^n b_{n-\ell}\,\delta v^\ell.
\end{equation}
Equivalently, if one defines the history average
\begin{equation}\label{eq:history-average-general}
\bar v^{\,n-1}
:=
\sum_{j=1}^{n-1}(b_{j-1}-b_j)\,v^{n-j}+b_{n-1}v^0,
\end{equation}
then the coefficients in \eqref{eq:history-average-general} are nonnegative, sum to $1$, and
\begin{equation}\label{eq:L1-history-form}
(\partial_h^\alpha v)^n
=
\frac{1}{\Gammafun(2-\alpha)\,h^\alpha}\bigl(v^n-\bar v^{\,n-1}\bigr).
\end{equation}
\end{lemma}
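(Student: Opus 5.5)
The plan is to prove the three assertions in order: first \eqref{eq:L1-increment-form}, then the convexity of the coefficients in \eqref{eq:history-average-general}, and finally \eqref{eq:L1-history-form} by summation by parts.

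\textbf{Step 1: the increment form.} Substitute $\ell=n-k$ in \eqref{eq:L1op}. As $k$ runs over $0,\dots,n-1$, the index $\ell$ runs over $n,\dots,1$. Moreover $v^{n-k}-v^{n-k-1}=\delta v^{\ell}$ and $b_k=b_{n-\ell}$, which gives \eqref{eq:L1-increment-form} directly.

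\textbf{Step 2: properties of the coefficients.} The coefficients of $\bar v^{\,n-1}$ are $b_{j-1}-b_j$ for $1\le j\le n-1$, together with $b_{n-1}$.
\begin{itemize}
\item Nonnegativity follows from \eqref{eq:b-basic}: monotonicity $b_{j}\le b_{j-1}$ gives $b_{j-1}-b_j\ge 0$, and positivity gives $b_{n-1}>0$.
\item The sum telescopes: $\sum_{j=1}^{n-1}(b_{j-1}-b_j)+b_{n-1}=b_0=1^{1-\alpha}-0^{1-\alpha}=1$.
\end{itemize}
If one wants the monotonicity in \eqref{eq:b-basic} justified here rather than taken from the text, note that $b_k=\int_k^{k+1}(1-\alpha)s^{-\alpha}\dd s$. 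This is decreasing in $k$ because $s\mapsto s^{-\alpha}$ is decreasing.

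\textbf{Step 3: the history form.} Apply Abel summation to the sum in \eqref{eq:L1op}:
\begin{align*}
\sum_{k=0}^{n-1}b_k(v^{n-k}-v^{n-k-1})
&=\sum_{k=0}^{n-1}b_kv^{n-k}-\sum_{k=1}^{n}b_{k-1}v^{n-k}\\
&=b_0v^n-\sum_{k=1}^{n-1}(b_{k-1}-b_k)v^{n-k}-b_{n-1}v^0 .
\end{align*}
With $b_0=1$ and $j=k$, the right-hand side equals $v^n-\bar v^{\,n-1}$, which proves \eqref{eq:L1-history-form} after dividing by $\Gammafun(2-\alpha)h^\alpha$.

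The only care needed is in the index bookkeeping for the boundary terms $k=0$ and $k=n$, and in the degenerate case $n=1$. There the sum over $j$ is empty and $\bar v^{\,0}=b_0v^0=v^0$, consistent with the formula. None of these steps is a genuine obstacle.

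For the limiting convention $\alpha=1$ the same computation goes through: nonnegativity now holds because $b_0-b_1=1\ge0$ and all later differences vanish, and one gets $\bar v^{\,n-1}=v^{n-1}$.
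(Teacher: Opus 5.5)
Your proof is correct: the reindexing $\ell=n-k$, the telescoping $\sum_{j=1}^{n-1}(b_{j-1}-b_j)+b_{n-1}=b_0=1$, and the summation by parts with its boundary terms (including $n=1$) are all handled properly. The paper gives no proof of its own and cites Lin--Xu for these identities; your reindexing and summation-by-parts rearrangement of the L1 sum is the standard derivation behind that citation.
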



{The following positivity estimate for the L1 convolution holds for
arbitrary discrete histories; see
\cite[Lemma~3.1]{TangYuZhou2019Revision}.}

\begin{lemma}[L1 positivity / discrete fractional integration by parts]
\label{lem:L1pos}
Let $(\mathcal H,\langle\cdot,\cdot\rangle_{\mathcal H})$ be a real Hilbert
space.  For any $m\in\mathbb N$ and any sequence
$(v^n)_{n=0}^m\subset\mathcal H$,
\begin{equation}\label{eq:L1pos-Hilbert}
\sum_{n=1}^m
\left\langle
(\partial_h^\alpha v)^n,
\delta v^n
\right\rangle_{\mathcal H}
\ge
\frac{1}{2\Gammafun(2-\alpha)h^\alpha}
\sum_{n=1}^m
\|\delta v^n\|_{\mathcal H}^2 .
\end{equation}
In particular, with $\mathcal H=L^2(\Omega;\mathbb R^N)$,
\begin{equation}\label{eq:L1pos}
\sum_{n=1}^m \int_\Om (\partial_h^\alpha v)^n \cdot \delta v^n\dd x
\ge
\frac{1}{2\Gammafun(2-\alpha)h^\alpha}
\sum_{n=1}^m \|\delta v^n\|_{L^2(\Om)}^2 .
\end{equation}
\end{lemma}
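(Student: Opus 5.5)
The plan is to rewrite the left-hand side of \eqref{eq:L1pos-Hilbert} as a quadratic form in the increments and show that a Toeplitz matrix built from the L1 weights is positive semidefinite. Write $w_n:=\delta v^n$ and multiply through by $\Gammafun(2-\alpha)h^\alpha$. By the increment form \eqref{eq:L1-increment-form} of Lemma~\ref{lem:L1conv}, it suffices to show
\begin{equation*}
Q:=\sum_{n=1}^m\sum_{\ell=1}^n b_{n-\ell}\langle w_\ell,w_n\rangle_{\mathcal H}\ \ge\ \frac12\sum_{n=1}^m\|w_n\|_{\mathcal H}^2 .
\end{equation*}
Symmetrizing the off-diagonal terms gives $2Q=\sum_{n,\ell=1}^m A_{n\ell}\langle w_\ell,w_n\rangle$. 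Here $A_{nn}=2b_0=2$ and $A_{n\ell}=b_{|n-\ell|}$ for $n\neq\ell$. Thus $A=I+B$ with $B_{n\ell}:=b_{|n-\ell|}$, whose diagonal entries are $b_0=1$. The claim therefore reduces to $\sum_{n,\ell}B_{n\ell}\langle w_\ell,w_n\rangle\ge0$.

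First I reduce to scalars. The vectors $w_1,\dots,w_m$ span a finite-dimensional subspace of $\mathcal H$. Expanding them in an orthonormal basis $(e_i)$ of that subspace gives
\begin{equation*}
\sum_{n,\ell}B_{n\ell}\langle w_\ell,w_n\rangle=\sum_i \xi^{(i)\top}B\,\xi^{(i)},\qquad \xi^{(i)}_n=\langle w_n,e_i\rangle .
\end{equation*}
So it is enough to prove that the real symmetric Toeplitz matrix $B$ is positive semidefinite.

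The key step, and the main obstacle, is this semidefiniteness. I will use a Pólya-type argument based on convexity of the weights. Set $f(x)=x^{1-\alpha}$, so $b_k=f(k+1)-f(k)$. Monotonicity $b_k\ge b_{k+1}$ follows from concavity of $f$, as in \eqref{eq:b-basic}. Convexity $b_k-2b_{k+1}+b_{k+2}\ge0$ follows because this quantity is a third finite difference of $f$, and $f'''=(1-\alpha)\alpha(1+\alpha)x^{-2-\alpha}>0$ on $(0,\infty)$. Together with $b_k\to0$ and $b_k-b_{k+1}\to0$, summation by parts twice gives the representation
\begin{equation*}
b_k=\sum_{j\ge1}\mu_j\,(j-k)_+,\qquad \mu_j:=b_{j-1}-2b_j+b_{j+1}\ge0,\qquad k\ge0 .
\end{equation*}
The boundary terms vanish since $j(b_{j-1}-b_j)\to0$; this holds because $b_{j-1}-b_j=O(j^{-1-\alpha})$. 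The representation therefore gives $B=\sum_j\mu_jT^{(j)}$ with $T^{(j)}_{n\ell}=(j-|n-\ell|)_+$.

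Each tent matrix $T^{(j)}$ is positive semidefinite. Indeed, $(j-|p|)_+=\sum_{s\in\mathbb Z}\1_{[0,j)}(s)\,\1_{[0,j)}(s+p)$ is the autocorrelation of an indicator function, so
\begin{equation*}
\xi^\top T^{(j)}\xi=\sum_{s\in\mathbb Z}\Bigl(\sum_{n:\,0\le n-s<j}\xi_n\Bigr)^2\ge0 .
\end{equation*}
A nonnegative combination of positive semidefinite matrices is positive semidefinite. Convergence of the series is harmless, since only the finitely many entries with $|n-\ell|\le m-1$ matter and each series converges absolutely.

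Combining these steps gives $2Q\ge\sum_n\|w_n\|^2$, which is \eqref{eq:L1pos-Hilbert}. The special case \eqref{eq:L1pos} follows by taking $\mathcal H=L^2(\Om;\R^N)$.
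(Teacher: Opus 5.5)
Your proof is correct. Its reduction to scalar sequences, via an orthonormal basis of $\operatorname{span}\{\delta v^1,\dots,\delta v^m\}$, is exactly the paper's step. The difference is the scalar inequality itself: the paper does not prove it but imports it from \cite[Lemma~3.1]{TangYuZhou2019Revision}, after rescaling the weights $\beta_j=h^{1-\alpha}b_j/\Gammafun(2-\alpha)$ to the normalization of \eqref{eq:L1op}.

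You prove the scalar inequality directly. Symmetrizing gives $2Q=\sum_n\|w_n\|^2+\sum_{n,\ell}b_{|n-\ell|}\langle w_\ell,w_n\rangle$. The P\'olya-type decomposition $B=\sum_j\mu_jT^{(j)}$ into tent (autocorrelation) matrices then shows the second term is nonnegative.

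Compared with the citation, your route has three advantages:
\begin{itemize}
\item It is self-contained.
\item It shows that only $b_0=1$, positivity, monotonicity, convexity of $(b_k)$ and enough decay for $j(b_{j-1}-b_j)\to0$ are used. Hence the same estimate holds for any kernel weights with these properties.
\item It shows that the constant $\tfrac12$ comes exactly from the diagonal entry $b_0$.
\end{itemize}
The citation, in turn, buys brevity.

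Your tent-matrix identity also works verbatim for $\mathcal H$-valued vectors:
\[
\sum_{n,\ell}T^{(j)}_{n\ell}\langle w_\ell,w_n\rangle=\sum_{s}\Bigl\|\sum_{0\le n-s<j}w_n\Bigr\|_{\mathcal H}^2 .
\]
So the coordinate reduction is not actually needed.

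One small point needs care. For $j=1$, $\mu_1=f(3)-3f(2)+3f(1)-f(0)$ involves the endpoint $0$, where $f(x)=x^{1-\alpha}$ is not $C^3$. The mean-value argument for third differences should therefore be applied on $[\varepsilon,\varepsilon+3]$, followed by $\varepsilon\downarrow0$ using continuity of $f$ at $0$. This gives $\mu_1\ge0$, which is all your argument needs.
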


{The scalar inequality is \cite[Lemma~3.1]{TangYuZhou2019Revision}, after converting their weights
$\beta_j=\frac{h^{1-\alpha}}{\Gammafun(2-\alpha)}b_j$
to the normalization in \eqref{eq:L1op}.
For the Hilbert-space version, apply that scalar inequality to each coordinate
of $\delta v^1,\ldots,\delta v^m$ in an orthonormal basis of their finite-dimensional
span and sum.}

\section{A history-aware fractional MBO scheme}\label{sec:mbo}

We begin with the prototypical subdiffusive equation associated with the time-fractional $L^2$-gradient flow (in the sense of \cite{FritzKhristenkoWohlmuth2023}) of the Dirichlet energy
\begin{equation}\label{eq:subdiff-heat}
\Caputo u = \Delta u.
\end{equation}
In the classical case $\alpha=1$, the MBO scheme is obtained by alternating a heat step and a thresholding step.
For $\alpha\in(0,1)$, the L1 approximation naturally introduces a nontrivial history average in the diffusion step:

Given a sequence of thresholded states $(\chi^k)_{k=0}^{n-1}$, define the L1 history average
\begin{equation}\label{eq:ubar}
\bar\chi^{\,n-1}
:=
\sum_{j=1}^{n-1}(b_{j-1}-b_j)\,\chi^{n-j} + b_{n-1}\,\chi^0.
\end{equation}
By Lemma~\ref{lem:L1conv}, the coefficients are nonnegative and sum to $1$.
Thus $\bar\chi^{\,n-1}$ is a convex combination of the past thresholded states.

\subsection{Fractional Helmholtz-threshold scheme (two-phase)}

\begin{definition}[Fractional Helmholtz-threshold scheme]
Let $\chi^0\in L^\infty(\Om;\{0,1\})$.  For $n\ge 1$, assume that
$\chi^0,\dots,\chi^{n-1}\in L^\infty(\Om;\{0,1\})$ have already been
constructed and define the L1 history average $\bar\chi^{\,n-1}$ by
\eqref{eq:ubar}.  The pre-threshold field $w^n\in H^1(\Om)$ is the weak
solution of
\begin{equation}\label{eq:mbo-diff}
\Big(\frac{1}{\Gammafun(2-\alpha)h^\alpha}I-\Delta\Big)w^n
=
\frac{1}{\Gammafun(2-\alpha)h^\alpha}\,\bar\chi^{\,n-1}.
\end{equation}
Equivalently, it holds
$w^n=(I-\ell_{\alpha,h}^2\Delta)^{-1}\bar\chi^{\,n-1}$
where  
$\ell_{\alpha,h}^2=\Gammafun(2-\alpha)h^\alpha$,
with the periodic boundary condition inherited from $\Om$.  The next thresholded state is
\begin{equation}\label{eq:mbo-threshold}
\chi^n := T(w^n),
\qquad
T(s):=\1_{\{s>1/2\}} .
\end{equation}
Thus points satisfying $w^n=1/2$ are assigned to the zero phase.  This fixed
convention is used throughout the scalar two-phase analysis.  With this choice
the threshold map $T$ is nondecreasing, which is needed in the scalar
comparison argument below.
\end{definition}

Equation \eqref{eq:mbo-diff} is the Helmholtz-type resolvent associated with the L1 discretization of \eqref{eq:subdiff-heat}: by \eqref{eq:L1-history-form}, the implicit L1 scheme for $\Caputo u=\Delta u$ reads
\begin{equation*}
\frac{u^n-\bar u^{\,n-1}}{\Gammafun(2-\alpha)h^\alpha}=\Delta u^n.
\end{equation*}
The notation distinguishes the pre-threshold diffused field $w^n$, which solves \eqref{eq:mbo-diff}, from the post-threshold phase indicator $\chi^n$.

\begin{proposition}[Variational characterization of the diffusion step]\label{prop:var-mbo}
The solution $w^n$ of \eqref{eq:mbo-diff} is the unique minimizer (over $H^1(\Om)$) of
\begin{equation}\label{eq:Jmbo}
\mathcal{J}_n(w)
=
\frac12\int_\Om |\nabla w|^2\dd x
+
\frac{1}{2\Gammafun(2-\alpha)h^\alpha}\int_\Om |w-\bar\chi^{\,n-1}|^2\dd x.
\end{equation}
\end{proposition}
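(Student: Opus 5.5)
The plan is to treat $\mathcal J_n$ as a strictly convex, coercive quadratic functional on $H^1(\Om)$. Show it has a unique minimizer, then identify that minimizer with the weak solution of \eqref{eq:mbo-diff} through the Euler--Lagrange equation. Set $\lambda:=1/(\Gammafun(2-\alpha)h^\alpha)>0$. Since $\bar\chi^{\,n-1}$ is a convex combination of indicator functions (Lemma~\ref{lem:L1conv}), it satisfies $0\le\bar\chi^{\,n-1}\le 1$, so $\bar\chi^{\,n-1}\in L^\infty(\Om)\subset L^2(\Om)$ on the bounded torus. Hence $\mathcal J_n$ is finite on $H^1(\Om)$.

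\textbf{Existence and uniqueness.} Consider the bilinear form
\[
a(w,\varphi):=\int_\Om\nabla w\cdot\nabla\varphi\dd x+\lambda\int_\Om w\varphi\dd x .
\]
It is bounded and coercive on $H^1(\Om)$, with $a(w,w)\ge\min(1,\lambda)\|w\|_{H^1}^2$. The map $\varphi\mapsto\lambda\int_\Om\bar\chi^{\,n-1}\varphi\dd x$ is a bounded linear functional. Lax--Milgram therefore gives a unique $w^n\in H^1(\Om)$ with
\[
a(w^n,\varphi)=\lambda\int_\Om\bar\chi^{\,n-1}\varphi\dd x\quad\text{for all }\varphi\in H^1(\Om).
\]
This is exactly the weak form of \eqref{eq:mbo-diff}, with periodicity built into the space.

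\textbf{Minimality.} Expand the functional around $w^n$. For any $\varphi\in H^1(\Om)$,
\[
\mathcal J_n(w^n+\varphi)=\mathcal J_n(w^n)+\Bigl[a(w^n,\varphi)-\lambda\int_\Om\bar\chi^{\,n-1}\varphi\dd x\Bigr]+\tfrac12 a(\varphi,\varphi).
\]
The bracket vanishes by the weak equation. Coercivity of $a$ then gives $\mathcal J_n(w^n+\varphi)>\mathcal J_n(w^n)$ whenever $\varphi\neq0$. So $w^n$ is a minimizer, and it is the unique one.

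\textbf{Consistency of the two characterizations.} Conversely, any minimizer has vanishing first variation, so it satisfies the same weak equation. This is consistent with uniqueness and shows the two characterizations coincide.

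No step is a real obstacle. The only point to check is that $\bar\chi^{\,n-1}\in L^2$ and that the constant functions, the kernel of $-\Delta$ on the torus, cause no trouble. They do not, because the zeroth-order term $\lambda>0$ makes the form coercive on all of $H^1$ without any mean-zero constraint.
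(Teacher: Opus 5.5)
Your proof is correct and takes essentially the same route as the paper: the paper notes that $\mathcal J_n$ is strictly convex and coercive on $H^1(\Om)$, hence has a unique minimizer, and that its Euler--Lagrange equation is \eqref{eq:mbo-diff}. You make this explicit by getting the weak solution from Lax--Milgram and proving strict minimality from the exact expansion $\mathcal J_n(w^n+\varphi)=\mathcal J_n(w^n)+\tfrac12 a(\varphi,\varphi)$; this also gives the uniqueness of the weak solution that the phrase ``the solution $w^n$'' presupposes.
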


\begin{proof}
The functional $\mathcal{J}_n$ is strictly convex and coercive on $H^1(\Om)$, hence it has a unique minimizer.
Its Euler--Lagrange equation is precisely \eqref{eq:mbo-diff}.
\end{proof}

{The fidelity term in \eqref{eq:Jmbo} is centered on the L1 history
average $\bar\chi^{\,n-1}$, a weighted combination of all past thresholded
states. This dependence on the phase history encodes the discrete memory
mechanism of the Caputo derivative.}

{The history averaging preserves the order structure of the scalar
diffusion step. The following maximum/comparison principle gives bounds for
the pre-threshold fields and transfers their ordering through the scalar
threshold map.}

\begin{proposition}[Maximum and comparison principle for the scalar diffusion step]\label{prop:mbo-maxprinciple}
Let $\chi^0\in L^\infty(\Om;\{0,1\})$, and let $w^n$ be defined by \eqref{eq:mbo-diff}.  Then, for every $n\ge1$,
\begin{equation*}
0\le \bar\chi^{\,n-1}\le 1
\qquad\text{and}\qquad
0\le w^n\le1
\quad\text{a.e. in }\Om.
\end{equation*}
Moreover, if $(\chi^k)_{k=0}^{n-1}$ and $(\widetilde\chi^k)_{k=0}^{n-1}$ are two histories with $\chi^k\le\widetilde\chi^k$ a.e. for all $k\le n-1$, then the corresponding history averages and diffused fields satisfy
\begin{equation*}
\bar\chi^{\,n-1}\le \bar{\widetilde\chi}^{\,n-1},
\qquad
w^n\le \widetilde w^n
\quad\text{a.e. in }\Om.
\end{equation*}
Consequently, thresholding at the fixed level $1/2$ preserves this order pointwise for the two scalar diffused fields.
\end{proposition}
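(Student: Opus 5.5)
The proof runs by induction on $n$, using two ingredients: the convexity of the L1 history average from Lemma~\ref{lem:L1conv}, and a weak maximum principle for the periodic Helmholtz operator $L:=I-\ell_{\alpha,h}^2\Delta$ obtained by testing with truncations.

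\emph{Bounds on the history average.} Suppose $\chi^0,\dots,\chi^{n-1}$ take values in $\{0,1\}$. For $n=1$ this holds by assumption; for $n\ge2$ it holds because each $\chi^k=T(w^k)$ is an indicator function. By Lemma~\ref{lem:L1conv}, the coefficients $b_{j-1}-b_j$ and $b_{n-1}$ are nonnegative and sum to $1$; nonnegativity also follows from $b_k>0$ and monotonicity in \eqref{eq:b-basic}. Hence $\bar\chi^{\,n-1}$ is a pointwise convex combination of values in $[0,1]$, so $0\le\bar\chi^{\,n-1}\le1$ a.e. The same nonnegativity gives the order statement at once. If $\chi^k\le\widetilde\chi^k$ for all $k$, then
\[
\bar{\widetilde\chi}^{\,n-1}-\bar\chi^{\,n-1}=\sum_{j=1}^{n-1}(b_{j-1}-b_j)(\widetilde\chi^{n-j}-\chi^{n-j})+b_{n-1}(\widetilde\chi^0-\chi^0)\ge0 .
\]
No induction is needed for this part: the order of the averages depends only on the given histories.

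\emph{Weak comparison for $L$.} This is the main analytic step. Let $f\le g$ a.e. in $L^2(\Om)$, and let $w,\widetilde w\in H^1(\Om)$ be the periodic weak solutions of $Lw=f$ and $L\widetilde w=g$. Set $z:=w-\widetilde w$ and test the weak formulation of $Lz=f-g$ with $z^+\in H^1(\Om)$. Using $\nabla z^+=\1_{\{z>0\}}\nabla z$, we get
\[
\int_\Om |z^+|^2\dd x+\ell_{\alpha,h}^2\int_\Om|\nabla z^+|^2\dd x=\int_\Om (f-g)\,z^+\dd x\le0 .
\]
Hence $z^+=0$, that is, $w\le\widetilde w$ a.e.

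Periodicity means there is no boundary term when integrating by parts, and constants are exact solutions: $L1=1$ and $L0=0$. Applying the comparison with $(f,g)=(0,\bar\chi^{\,n-1})$ and then with $(f,g)=(\bar\chi^{\,n-1},1)$ gives $0\le w^n\le1$. Applying it with $(f,g)=(\bar\chi^{\,n-1},\bar{\widetilde\chi}^{\,n-1})$ gives $w^n\le\widetilde w^n$. Uniqueness of the weak solution, via Proposition~\ref{prop:var-mbo}, ensures these comparisons apply to the fields defined by \eqref{eq:mbo-diff}.

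\emph{Threshold step.} $T=\1_{\{s>1/2\}}$ is nondecreasing, so $w^n\le\widetilde w^n$ implies $T(w^n)\le T(\widetilde w^n)$ pointwise. This gives the final order statement. It also shows $\chi^n\in L^\infty(\Om;\{0,1\})$, which closes the induction for the bounds.

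I expect the only delicate point to be justifying the truncation test function. One needs $z^+\in H^1(\Om)$ with the chain rule for $\nabla z^+$ on the torus, which is the standard Stampacchia lemma. Everything else is convexity and monotonicity.
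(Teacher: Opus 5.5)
Your proof is correct and follows the paper's argument. You use convexity of the L1 history average for the bounds and order on $\bar\chi^{\,n-1}$, then test the Helmholtz equation with positive-part truncations, and finish with monotonicity of $T$. The differences are cosmetic: you make the induction ensuring $\chi^k\in\{0,1\}$ explicit, and you obtain $0\le w^n\le 1$ by comparing with the constant solutions $0$ and $1$, which is the same computation as the paper's direct tests with $(w^n-1)^+$ and $-(w^n)^-$.
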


\begin{proof}
By Lemma~\ref{lem:L1conv}, $\bar\chi^{\,n-1}$ is a convex combination of the previous indicators.  Hence $0\le\bar\chi^{\,n-1}\le1$ a.e.  Set $a=(\Gammafun(2-\alpha)h^\alpha)^{-1}>0$.  The diffusion equation reads
\begin{equation*}
(aI-\Delta)w^n=a\bar\chi^{\,n-1}.
\end{equation*}
Testing this equation with $(w^n-1)^+$ gives
\begin{equation*}
a\|(w^n-1)^+\|_{L^2(\Om)}^2
+\|\nabla(w^n-1)^+\|_{L^2(\Om)}^2
\le
 a\int_\Om (\bar\chi^{\,n-1}-1)(w^n-1)^+\dd x\le0,
\end{equation*}
and therefore $w^n\le1$.  Testing with $- (w^n)^-$ analogously gives $w^n\ge0$.

For the comparison statement, the monotonicity of the history average follows again from the nonnegative L1 coefficients.  If $\bar\chi^{\,n-1}\le \bar{\widetilde\chi}^{\,n-1}$, the difference $z=w^n-\widetilde w^n$ satisfies $(aI-\Delta)z\le0$.  Testing with $z^+$ yields $z^+=0$, hence $w^n\le\widetilde w^n$.  The pointwise implication for thresholded indicators follows immediately from the monotonicity of the map $s\mapsto\mathbf 1_{\{s>1/2\}}$.
\end{proof}

\subsection{The L1 subdiffusion structure behind thresholding}\label{sec:subdiffusion}

{We now relate the fractional MBO diffusion step to the L1 subdiffusion
resolvent with a prescribed history.}

Let $(f^k)_{k=0}^{n-1}\subset L^2(\Om)$ be a prescribed history and define
\begin{equation*}
\bar f^{\,n-1}
:=
\sum_{j=1}^{n-1}(b_{j-1}-b_j)f^{n-j}+b_{n-1}f^0 .
\end{equation*}
The implicit L1 step for $\Caputo u=\Delta u$, with this history, is
\begin{equation}\label{eq:l1-subdiffusion-step}
\frac{u^n-\bar f^{\,n-1}}{\Gammafun(2-\alpha)h^\alpha}
=
\Delta u^n .
\end{equation}
Equivalently, it holds
\begin{equation}\label{eq:l1-subdiffusion-resolvent}
\left(\frac{1}{\Gammafun(2-\alpha)h^\alpha}I-\Delta\right)u^n
=
\frac{1}{\Gammafun(2-\alpha)h^\alpha}\bar f^{\,n-1} .
\end{equation}
{Thus the diffusion step of the fractional Helmholtz-threshold scheme
\eqref{eq:mbo-diff} is \eqref{eq:l1-subdiffusion-resolvent} with
$f^k=\chi^k$; applying the threshold map then gives the phase indicators.}

\begin{proposition}[Subdiffusion resolvent properties]\label{prop:subdiffusion-resolvent}
Let $a=(\Gammafun(2-\alpha)h^\alpha)^{-1}$, and let $u\in H^1(\Om)$ solve
\begin{equation}\label{eq:resolvent-af}
(aI-\Delta)u=a f
\end{equation}
with periodic boundary conditions, where $f\in L^2(\Om)$.  Then:
\begin{enumerate}[leftmargin=*]
\item $u$ is the unique minimizer (over $H^1(\Om)$) of
\begin{equation*}
J_f(v)=\frac12\int_\Om |\nabla v|^2\dd x
+\frac a2\int_\Om |v-f|^2\dd x.
\end{equation*}
\item The resolvent preserves the spatial average:
\begin{equation*}
\int_\Om u\dd x=\int_\Om f\dd x .
\end{equation*}
\item It is $L^2$-contractive in the sense that
\begin{equation*}
\|u\|_{L^2(\Om)}\le \|f\|_{L^2(\Om)} .
\end{equation*}
More generally, if $(aI-\Delta)u_i=af_i$ for $i=1,2$, then
\begin{equation*}
\|u_1-u_2\|_{L^2(\Om)}\le \|f_1-f_2\|_{L^2(\Om)} .
\end{equation*}
\item If $0\le f\le1$ a.e., then $0\le u\le1$ a.e.  If $f_1\le f_2$ a.e., then the corresponding solutions satisfy $u_1\le u_2$ a.e.
\end{enumerate}
\end{proposition}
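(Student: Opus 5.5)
The plan is to work throughout with the weak formulation of \eqref{eq:resolvent-af}: $u\in H^1(\Om)$ and
\begin{equation*}
a\int_\Om u\,\varphi\dd x+\int_\Om \nabla u\cdot\nabla\varphi\dd x=a\int_\Om f\,\varphi\dd x\qquad\text{for all }\varphi\in H^1(\Om).
\end{equation*}
Since $a>0$, the bilinear form on the left is coercive on $H^1(\Om)$, so Lax--Milgram gives existence and uniqueness of $u$. Each item then follows from a suitable choice of test function.

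For (1), I would repeat the argument of Proposition~\ref{prop:var-mbo} with $\bar\chi^{\,n-1}$ replaced by $f$. The functional $J_f$ is strictly convex on $H^1(\Om)$ because of the term $\frac a2\|v-f\|^2_{L^2}$ with $a>0$. It is also continuous and coercive, hence has a unique minimizer. Its first variation in a direction $\varphi$ is exactly the weak formulation above, so the minimizer coincides with $u$. For (2), I take $\varphi\equiv1$, which is admissible on the torus. The gradient term vanishes, and dividing by $a$ gives the mass identity.

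For (3), I argue by linearity. The difference $z=u_1-u_2$ solves the problem with data $g=f_1-f_2$, so it suffices to prove $\|u\|_{L^2}\le\|f\|_{L^2}$. Testing with $\varphi=u$ gives
\begin{equation*}
a\|u\|^2_{L^2}+\|\nabla u\|^2_{L^2}=a\int_\Om f\,u\dd x\le a\|f\|_{L^2}\|u\|_{L^2}.
\end{equation*}
Dropping the gradient term and dividing by $a\|u\|_{L^2}$ (the case $u=0$ being trivial) yields the bound. Equivalently, one can argue in Fourier series on $\T^d$, where the resolvent acts as the multiplier $a/(a+|k|^2)\in(0,1]$.

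For (4), I would follow the truncation argument in the proof of Proposition~\ref{prop:mbo-maxprinciple}. Test with $(u-1)^+\in H^1(\Om)$, use $\nabla u\cdot\nabla(u-1)^+=|\nabla(u-1)^+|^2$, and write $u=(u-1)+1$ on the support. This gives
\begin{equation*}
a\|(u-1)^+\|^2_{L^2}+\|\nabla(u-1)^+\|^2_{L^2}=a\int_\Om (f-1)(u-1)^+\dd x\le0,
\end{equation*}
so $u\le1$. Testing with $u^-$ in the same way gives $u\ge0$. For the comparison, set $z=u_1-u_2$ and test with $z^+$; this yields $a\|z^+\|^2_{L^2}+\|\nabla z^+\|^2_{L^2}=a\int_\Om (f_1-f_2)z^+\dd x\le0$, hence $z^+=0$.

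The only mildly delicate step is justifying the truncation test functions. This needs the Stampacchia facts that $v\mapsto v^\pm$ maps $H^1(\Om)$ into itself, with $\nabla v^+=\mathbf 1_{\{v>0\}}\nabla v$, and that this holds on the periodic domain. I would simply cite these facts; everything else is routine.
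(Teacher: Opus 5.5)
Your proposal is correct and follows the paper's proof essentially step for step: strict convexity, coercivity and the Euler--Lagrange equation for (1); integration over the torus (your test with $\varphi\equiv1$) for (2); testing with $u$, then linearity for differences, for (3); and truncation testing with $(u-1)^+$, $u^-$ and $(u_1-u_2)^+$ for (4). Your additions are accurate refinements rather than a different route: Lax--Milgram well-posedness, the Stampacchia justification of the truncated test functions, and the observation that the truncation relation holds with equality where the paper writes an inequality.
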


\begin{proof}
The variational characterization follows from strict convexity, coercivity, and
the Euler--Lagrange equation.  Integrating \eqref{eq:resolvent-af} over the
periodic torus gives preservation of the spatial average.  Testing
\eqref{eq:resolvent-af} by $u$ yields
\begin{equation*}
a\|u\|_{L^2}^2+\|\nabla u\|_{L^2}^2=a\int_\Om fu\dd x
\le a\|f\|_{L^2}\|u\|_{L^2},
\end{equation*}
and hence $\|u\|_{L^2}\le\|f\|_{L^2}$.  Applying the same argument to the
difference of two resolvent equations gives the contraction estimate.

For the maximum principle, test \eqref{eq:resolvent-af} by $(u-1)^+$.  If
$f\le1$, then
\begin{equation*}
a\|(u-1)^+\|_{L^2}^2+\|\nabla (u-1)^+\|_{L^2}^2
\le a\int_\Om (f-1)(u-1)^+\dd x\le0,
\end{equation*}
so $u\le1$.  Testing by $-u^-$ gives $u\ge0$ when $f\ge0$.  The
comparison principle follows by applying the same argument to
$(u_1-u_2)^+$.
\end{proof}

\begin{remark}[Mass and thresholding]
{The Helmholtz resolvent preserves the spatial average of its input.
Thresholding allows phase volumes to change, as in ordinary MBO motion by
mean curvature. Volume-preserving variants introduce an additional volume
constraint or a modified threshold rule.}
\end{remark}

\section{Structural properties and the role of memory}\label{sec:structural}

{The scheme combines the structural properties of the Helmholtz-threshold map}
\begin{equation*}
  f\longmapsto \1_{\{(I-\ell^2\Delta)^{-1}f>1/2\}},
\end{equation*}
{with the temporal dependence of its input. For a fixed input, the
fractional order enters this map through the smoothing length}
\begin{equation}\label{eq:ell-def}
  \ell_{\alpha,h}^2:=\Gammafun(2-\alpha)h^\alpha .
\end{equation}
{Temporal memory enters through the input datum}
\(f=\bar\chi^{\,n-1}\), the L1 history average of previous thresholded states.
{The memory-tail estimate and the pinning criterion with memory quantify
this history dependence.}  With \eqref{eq:ell-def}, the scalar diffusion step is
\begin{equation}\label{eq:resolvent-ell}
  w^n=(I-\ell_{\alpha,h}^2\Delta)^{-1}\bar\chi^{\,n-1}.
\end{equation}

\subsection{Projection and resolvent structure}

The thresholding step is the exact pointwise projection of the diffused field
onto the pure phases.  In the scalar case, for \(w\in L^2(\Om)\), every
minimizer of
\begin{equation*}
  \min_{\eta\in L^2(\Om;\{0,1\})}\int_\Om |\eta-w|^2\dd x
\end{equation*}
satisfies
\begin{equation*}
  \eta=1 \quad\text{a.e. on }\{w>1/2\},
  \qquad
  \eta=0 \quad\text{a.e. on }\{w<1/2\},
\end{equation*}
with arbitrary measurable choices on \(\{w=1/2\}\).  In the multiphase case,
projection onto \(\{e_1,\ldots,e_P\}\subset\R^P\) is obtained pointwise by
choosing an index in \(\operatorname*{arg\,max}_j w_j(x)\).  This follows from
\begin{equation*}
  |1-w|^2-|w|^2=1-2w,
  \qquad
  |e_i-w|^2=1-2w_i+|w|^2,
\end{equation*}
and is the variational interpretation of the scalar threshold and of the
minimum-index argmax rule \eqref{eq:num-argmax}.

The Helmholtz step also has the heat-mixture representation
\begin{equation}\label{eq:resolvent-heat-mixture-torus}
  (I-\ell^2\Delta)^{-1}f
  =
  \int_0^\infty e^{-s}e^{s\ell^2\Delta}f\dd s
\end{equation}
in \(L^2(\Om)\).  Hence it preserves constants, is positivity preserving, and is
an \(L^2\)-contraction.  On \(\R^d\), for bounded functions we use
\begin{equation}\label{eq:Rell-Rd}
  \mathcal R_\ell^{\R^d}f(x)
  :=
  \int_0^\infty e^{-s}(G_{s\ell^2}*f)(x)\dd s,
\end{equation}
where \(G_t\) is the heat kernel.  This operator is positivity preserving,
preserves constants, is an \(L^\infty\)-contraction, and satisfies
\((I-\ell^2\Delta)\mathcal R_\ell^{\R^d}f=f\) in distributions.  These facts
follow directly from the Fourier multiplier
\((1+\ell^2\lambda_k)^{-1}\) on the torus and from the nonnegative heat-kernel
average \eqref{eq:Rell-Rd} on \(\R^d\).

\begin{remark}[Effective length scale]\label{rem:effective-length-scale}
The parameter
$\ell_{\alpha,h}=(\Gammafun(2-\alpha)h^\alpha)^{1/2}$
is the spatial smoothing length of the Helmholtz resolvent.  At fixed \(h\),
changing \(\alpha\) changes both the L1 history weights and the one-step
smoothing length.  The monotonicity of \(\ell_{\alpha,h}\) as a function of
\(\alpha\) depends on \(h\), since
\begin{equation*}
  \frac{\dd}{\dd\alpha}\log \ell_{\alpha,h}^2
  =
  \log h-\psi(2-\alpha),
\end{equation*}
where \(\psi=\Gamma'/\Gamma\).  For the small time step used in the computations
below, \(h=5\cdot10^{-4}\), the displayed values of \(\alpha\) lie in a regime
where smaller \(\alpha\) gives a larger smoothing length.
\end{remark}

\subsection{Where the fractional parameter enters}
\label{subsec:where-alpha-enters}

{We now examine how the fractional order determines the history average
supplied to the Helmholtz-threshold map.} By \eqref{eq:history-average-general}, it holds
\begin{equation}\label{eq:history-alpha-weights}
\bar\chi^{\,n-1}
=
\sum_{r=0}^{n-2}\omega_r^{(\alpha)}\chi^{n-1-r}
+
\omega_{n-1}^{(\alpha)}\chi^0,
\end{equation}
where the weights are defined by
\begin{equation}\label{eq:omega-alpha}
\omega_r^{(\alpha)}:=b_r-b_{r+1}\quad(0\le r\le n-2),
\qquad
\omega_{n-1}^{(\alpha)}:=b_{n-1}.
\end{equation}
We note that the weights are nonnegative and sum to one.
For scalar phase indicators, or coordinate-vector indicators with the
componentwise $L^\infty$ norm, the limit as $\alpha\uparrow1$
is in fact uniform in the history length. For $n\ge2$, the total coefficient
of all states other than $\chi^{n-1}$ is $b_1$, so
\begin{equation}\label{eq:uniform-alpha-one-history}
 \|\bar\chi^{\,n-1}-\chi^{n-1}\|_{L^\infty}
 \le b_1=2^{1-\alpha}-1\longrightarrow0.
\end{equation}
For $n=1$ the difference is zero. More generally the right-hand side is
$D b_1$ for histories of diameter at most $D$. This estimate is uniform over prescribed bounded histories. Extending it
to evolving thresholded trajectories requires control of the pre-threshold
fields near the threshold level, where the threshold map is discontinuous.

\begin{lemma}[Memory-tail estimate]\label{lem:memory-tail}
Let \(0<\alpha<1\), and let \((z^k)_{k=0}^{n-1}\) be a sequence with values in a
normed vector space.  Assume that the set of values has diameter at most \(D\).
Define
\begin{equation*}
\bar z^{\,n-1}
=
\sum_{r=0}^{n-2}\omega_r^{(\alpha)}z^{n-1-r}
+
\omega_{n-1}^{(\alpha)}z^0 .
\end{equation*}
Then
\begin{equation}\label{eq:memory-tail-general}
  \|\bar z^{\,n-1}-z^{n-1}\|
  \le
  \sum_{r=1}^{n-1}\omega_r^{(\alpha)}
  \|z^{n-1-r}-z^{n-1}\|.
\end{equation}
If, in addition,
\(z^{n-1}=z^{n-2}=\cdots=z^{n-q}\) for some \(1\le q\le n-1\), then
\begin{equation}\label{eq:memory-tail-bq}
  \|\bar z^{\,n-1}-z^{n-1}\|
  \le
  D b_q
  =
  D\bigl((q+1)^{1-\alpha}-q^{1-\alpha}\bigr).
\end{equation}
For scalar phase indicators in \(L^\infty\), or coordinate-vector phase
indicators in the pointwise \(\ell^\infty\)-norm, one may take \(D=1\).
\end{lemma}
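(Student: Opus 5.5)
The plan is to exploit the convexity of the weights. By \eqref{eq:omega-alpha} and the telescoping sum $\sum_{r=0}^{n-2}(b_r-b_{r+1})+b_{n-1}=b_0=1$ (already recorded after Lemma~\ref{lem:L1conv}), the weights $\omega_r^{(\alpha)}$ are nonnegative by \eqref{eq:b-basic} and sum to one. So I would write
\begin{equation*}
\bar z^{\,n-1}-z^{n-1}=\sum_{r=0}^{n-1}\omega_r^{(\alpha)}\bigl(z^{n-1-r}-z^{n-1}\bigr),
\end{equation*}
interpreting the $r=n-1$ term as involving $z^0$, consistent with the definition. The $r=0$ summand vanishes identically. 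Applying the triangle inequality and using $\omega_r^{(\alpha)}\ge0$ then gives \eqref{eq:memory-tail-general} directly.

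For \eqref{eq:memory-tail-bq}, I would use the constancy hypothesis $z^{n-1}=\dots=z^{n-q}$. It makes the summands with $1\le r\le q-1$ vanish. Every remaining difference is bounded by the diameter $D$, so the bound reduces to $D\sum_{r=q}^{n-1}\omega_r^{(\alpha)}$. The key computation is then the telescoping identity
\begin{equation*}
\sum_{r=q}^{n-2}(b_r-b_{r+1})+b_{n-1}=b_q,
\end{equation*}
which holds for $q\le n-1$; the case $q=n-1$ leaves only the term $b_{n-1}$, which is consistent. Finally, substituting \eqref{eq:b-weights} gives $b_q=(q+1)^{1-\alpha}-q^{1-\alpha}$.

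For the last assertion, I would note the following. Scalar indicators take values in $\{0,1\}$ pointwise, so any difference $z^i-z^j$ has $L^\infty$ norm at most $1$. For coordinate vectors $e_i,e_j$, the pointwise $\ell^\infty$ norm of $e_i-e_j$ is at most $1$. Taking the essential supremum over $x$, the diameter is at most $1$, so $D=1$.

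No step is a real obstacle here. The only care needed is the index bookkeeping at the boundary term $\omega_{n-1}^{(\alpha)}=b_{n-1}$ in the telescoping sum, including the degenerate case $q=n-1$.
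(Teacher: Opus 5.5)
Your proposal is correct and takes essentially the same route as the paper: you use that the weights are nonnegative and sum to one to write $\bar z^{\,n-1}-z^{n-1}$ as a weighted sum of differences, apply the triangle inequality, and telescope $\sum_{r=q}^{n-1}\omega_r^{(\alpha)}=b_q$. You also check the $D=1$ claim and the edge case $q=n-1$ explicitly, which the paper leaves implicit.
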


\begin{proof}
Since the weights sum to one,
\begin{equation*}
  \bar z^{\,n-1}-z^{n-1}
  =
  \sum_{r=1}^{n-1}\omega_r^{(\alpha)}(z^{n-1-r}-z^{n-1}),
\end{equation*}
which gives \eqref{eq:memory-tail-general}.  If the last \(q\) previous states
agree with \(z^{n-1}\), then the summands with \(1\le r\le q-1\) vanish, and
\begin{equation*}
  \sum_{r=q}^{n-1}\omega_r^{(\alpha)}
  =
  \sum_{r=q}^{n-2}(b_r-b_{r+1})+b_{n-1}
  =b_q .
\end{equation*}
This proves \eqref{eq:memory-tail-bq}.
\end{proof}

\begin{remark}[Interpretation]
The memory tail satisfies \(b_q\sim(1-\alpha)q^{-\alpha}\) as \(q\to\infty\).
Thus, even if an interface has been locally unchanged for many steps, older
thresholded states can still bias the next pre-diffusion datum.  This is the
part of the algorithm where the fractional order affects the dynamics beyond the
single-step smoothing length \(\ell_{\alpha,h}\).
\end{remark}

\begin{remark}[{Small-order limit at a fixed time step}]\label{rem:small-alpha}
Fix the dimensionless time step $h>0$ and a finite time level $n$.
As $\alpha\downarrow0$,
\begin{equation}\label{eq:small-alpha-parameters}
 b_k\longrightarrow1\quad\text{for each fixed }k,\qquad
 \ell_{\alpha,h}^2\longrightarrow1.
\end{equation}
For scalar phase-indicator histories with the same initial state (or
componentwise for coordinate-vector indicators), even if the later
states depend on $\alpha$, convexity gives
\begin{equation}\label{eq:small-alpha-history}
 \|\bar\chi^{\,n-1}-\chi^0\|_{L^\infty}
 \le 1-b_{n-1}\longrightarrow0.
\end{equation}
Indeed, $1-b_{n-1}$ is exactly the total coefficient of the states with
positive time indices. Consequently, the continuous pre-threshold field satisfies
\begin{equation*}
 w^n\longrightarrow (I-\Delta)^{-1}\chi^0\qquad\text{in }L^2(\Om).
\end{equation*}
This follows from \eqref{eq:small-alpha-history}, resolvent contraction, and
convergence of the Fourier multipliers $(1+\ell_{\alpha,h}^2\lambda_k)^{-1}$.
Convergence of the corresponding thresholded states requires additional
control on any limiting tie set. For small positive orders, the initial
state therefore dominates the finite-level history, while the smoothing
length is comparable to the side length of the unit computational torus.

\end{remark}

\subsection{Flat interfaces and finite-dimensional pinning}

An exactly flat interface is fixed by the Helmholtz-threshold map.  Indeed, if
\(H=\{x\in\R^d:x\cdot\nu>c\}\), \(|\nu|=1\), and \(\chi=\1_H\), then
\begin{equation}\label{eq:flat-interface-values}
  w:=\mathcal R_\ell^{\R^d}\chi
\end{equation}
satisfies
\begin{equation*}
  w(x)>\frac12\quad\text{if }x\cdot\nu>c,
  \qquad
  w(x)=\frac12\quad\text{if }x\cdot\nu=c,
  \qquad
  w(x)<\frac12\quad\text{if }x\cdot\nu<c.
\end{equation*}
This follows from \eqref{eq:Rell-Rd}: each heat convolution with a radial kernel
has the corresponding half-space symmetry and monotonicity, and the exponential
average preserves these inequalities.  Hence thresholding recovers the same
half-space, up to the deterministic convention on the boundary hyperplane.  The
same symmetry argument applies to periodic flat slabs when the complementary
phases are arranged symmetrically with respect to the relevant flat interfaces.

{We also record a finite-dimensional pinning criterion.}  Let
\(L\in\R^{M\times M}\) be a graph or finite-difference Laplacian with
\(L\mathbf1=0\), and assume that
\begin{equation}\label{eq:linfty-resolvent-contraction}
  \|(I+\ell^2L)^{-1}g\|_{\ell^\infty}
  \le
  \|g\|_{\ell^\infty}
  \qquad\text{for all }g\in\R^M .
\end{equation}
This holds, for instance, under the usual monotone \(M\)-matrix hypotheses on
\(I+\ell^2L\).  If \(\chi\in\{0,1\}^M\), \(f\in[0,1]^M\), and
\(w=(I+\ell^2L)^{-1}f\), then
\begin{equation}\label{eq:pinning-condition}
  \|f-\chi\|_{\ell^\infty}+\ell^2\|L\chi\|_{\ell^\infty}<\frac12
\end{equation}
implies $\1_{\{w_i>1/2\}}=\chi_i$ for any $i=1,\ldots,M$
Indeed, it holds
\begin{equation*}
  w-\chi=(I+\ell^2L)^{-1}(f-\chi-\ell^2L\chi),
\end{equation*}
and \eqref{eq:linfty-resolvent-contraction} gives
\(\|w-\chi\|_{\ell^\infty}<1/2\).  At the first step, where \(f=\chi\), the
sufficient pinning condition reduces to
\begin{equation}\label{eq:first-step-pinning}
  \ell^2\|L\chi\|_{\ell^\infty}<\frac12 .
\end{equation}

Combining \eqref{eq:pinning-condition} with the memory-tail estimate gives the
fractional version: if \(f=\bar\chi^{\,n-1}\), \(\chi=\chi^{n-1}\), and
\(\chi^{n-1}=\chi^{n-2}=\cdots=\chi^{n-q}\), then the next thresholding step is
pinned whenever
\begin{equation}\label{eq:pinning-memory-tail}
  b_q+
  \ell_{\alpha,h}^2\|L\chi^{n-1}\|_{\ell^\infty}<\frac12 .
\end{equation}
For a nearest-neighbour grid Laplacian, \(\|L\chi\|_{\ell^\infty}\) is of order
\(\Delta x^{-2}\) at grid points adjacent to an interface, so
\eqref{eq:first-step-pinning} expresses the familiar finite-grid rule that the
smoothing length must be comparable to the grid scale before thresholding can
move an interface.  {The criterion uses the resolvent contraction assumption
\eqref{eq:linfty-resolvent-contraction}. Its application to the finite-element
discretization is discussed in Section~\ref{sec:implementation}.}

\subsection{First-step normalization and formal sharp-interface heuristic}

{At the first step, \(\bar\chi^0=\chi^0\), so the Helmholtz-threshold
map acts directly on the initial indicator.}
For a compact \(C^4\) set \(E\subset\R^d\), with signed distance \(d_E>0\) in
\(E\), inward normal \(\nu=\nabla d_E\), and
\(\kappa=-\Delta d_E|_{\partial E}\), the usual local boundary-layer expansion
for the Helmholtz resolvent gives, for \(w_\ell=\mathcal R_\ell^{\R^d}\1_E\),
\begin{equation}\label{eq:first-step-local-expansion-short}
  w_\ell(y+\ell^2\eta\nu(y))
  =
  \frac12+
  \ell\left(\frac\eta2-\frac{\kappa(y)}4\right)+O_R(\ell^2)
\end{equation}
uniformly for \(y\in\partial E\) and \(|\eta|\le R\), and
\(\partial_\nu w_\ell(y+\ell^2\eta\nu(y))=(2\ell)^{-1}+O_R(1)\).  {The expansion follows the local analysis of convolution-generated
threshold dynamics;} see, for
example, \cite{IshiiPiresSouganidis1999} for general threshold-dynamics
approximation schemes and \cite{EsedogluJacobsZhang2017} for the role of
convolution kernels in prescribing surface tension and mobility.  The displayed
expansion fixes the normalization of the present Helmholtz kernel.  It implies
that the new \(1/2\)-level set is a normal graph
\begin{equation}\label{eq:first-step-displacement}
  \rho_\ell(y)=\frac{\ell^2}{2}\kappa(y)+O(\ell^3).
\end{equation}
Thus the first Helmholtz-threshold step has the mean-curvature displacement
corresponding to effective time \(\ell^2/2\), with the sign convention
\(d_E>0\) in the phase.  

Formally, if all relevant past interfaces are smooth and can be transported as
normal graphs over the current interface on the scale \(O(\ell_{\alpha,h}^2)\),
linearizing \eqref{eq:first-step-local-expansion-short} for each indicator in
the L1 history average suggests
\begin{equation*}
  \bigl(X^n-\bar X^{\,n-1}\bigr)\cdot\nu^{n-1}
  \approx
  \frac{\ell_{\alpha,h}^2}{2}\kappa^{n-1},
\end{equation*}
or, equivalently,
$(\partial_h^\alpha X)^n\cdot\nu^{n-1}\approx \frac12\kappa^{n-1}$.
{This relation is a formal geometric interpretation under the smoothness
and transport assumptions above. The Caputo derivative of an interface
parametrization depends on how past interfaces are transported. A rigorous
sharp-interface analysis therefore requires compactness of the thresholded
interfaces and identification of a canonical or otherwise controlled memory term.}


\section{Numerical experiments}\label{sec:implementation}

This section reports the numerical experiments used in the figures.  All
computations in this section use the history-aware fractional thresholding
scheme described in \Cref{sec:mbo,sec:subdiffusion}. 

\subsection{Discrete multiphase scheme used in the computations}

We use the equal-surface-tension multiphase version of the MBO step, in the spirit of multiphase threshold dynamics and diffusion-generated grain-growth algorithms \cite{MerrimanBenceOsher1994,EsedogluOtto2015,ElseyEsedogluSmereka2009}.  Let
$\chi_k^n:\Om\to\{0,1\}$, $k=1,\dots,P$, denote the phase indicators at time $t_n=nh$, with
$\sum_{k=1}^P \chi_k^n=1$ a.e. in $\Om$.
For $\alpha\in(0,1)$, the diffused field $w_k^n$ is obtained from
\begin{equation}\label{eq:num-fractional-diffusion}
\left(\frac{1}{\Gammafun(2-\alpha)h^\alpha}I-\Delta\right)w_k^n
=
\frac{1}{\Gammafun(2-\alpha)h^\alpha}\,\bar\chi_k^{\,n-1},
\end{equation}
where
\begin{equation}\label{eq:num-history}
\bar\chi_k^{\,n-1}
=
\sum_{j=1}^{n-1}(b_{j-1}-b_j)\chi_k^{n-j}+b_{n-1}\chi_k^0,
\qquad
b_j=(j+1)^{1-\alpha}-j^{1-\alpha}.
\end{equation}
The history average preserves the partition constraint.  Indeed, since the L1
coefficients are nonnegative and sum to one, it implies
$\bar\chi_k^{\,n-1}\ge0$ and
$\sum_{k=1}^P\bar\chi_k^{\,n-1}=1$
a.e. in $\Om$,
whenever it holds $\sum_{k=1}^P\chi_k^m=1$ a.e.~for all previous time levels.  At the
continuous level the Helmholtz resolvent preserves constants; summing
\eqref{eq:num-fractional-diffusion} over $k$ gives
\begin{equation*}
\left(\frac{1}{\Gammafun(2-\alpha)h^\alpha}I-\Delta\right)
\sum_{k=1}^P w_k^n
=
\frac{1}{\Gammafun(2-\alpha)h^\alpha},
\end{equation*}
and uniqueness of the periodic Helmholtz problem implies
$\sum_{k=1}^P w_k^n=1$ a.e. in $\Om$.
The minimum-index argmax rule (or redistribution rule) \begin{equation}\label{eq:num-argmax}
\chi_k^n(x)=1
\quad\Longleftrightarrow\quad
k=\min\operatorname*{arg\,max}_{\ell=1,\dots,P}w_\ell^n(x)
\end{equation} therefore produces a
measurable partition.  The same conclusion holds for a spatial discretization
provided the discrete Helmholtz solver preserves constants.

The minimum-index convention is used only to make ties deterministic.  For $\alpha=1$, the L1 weights are replaced by their classical limit, $b_0=1$ and $b_j=0$ for $j\ge1$, so that $\bar\chi_k^{\,n-1}=\chi_k^{n-1}$.  The resulting method is the equal-tension multiphase Helmholtz-threshold scheme
\begin{equation*}
  (I-h\Delta)w_k^n=\chi_k^{n-1},
  \qquad
  \chi_k^n(x)=1
  \Longleftrightarrow
  k=\min\operatorname*{arg\,max}_{\ell=1,\dots,P}w_\ell^n(x).
\end{equation*}
It is a resolvent analogue of the usual heat-kernel multiphase
MBO scheme.

\subsection{Implementation details}

The computations were carried out on the flat unit torus, represented by a periodic finite-element mesh of the unit square.  The Helmholtz problems in \eqref{eq:num-fractional-diffusion} are discretized with continuous piecewise affine finite elements on a structured $200\times200$ periodic mesh.  At every time step one scalar Helmholtz problem is solved for each label, followed by the pointwise redistribution rule \eqref{eq:num-argmax}.  
{The pinning criterion in \Cref{sec:structural} applies to discretizations
satisfying the resolvent contraction assumption
\eqref{eq:linfty-resolvent-contraction}. For finite elements, mass lumping
together with an $M$-matrix stiffness structure provides a sufficient setting.
Application of the criterion to the consistent-mass $P^1$ scheme used here
remains conditional on verification of that discrete resolvent contraction.}

The simulation parameters are listed in \Cref{tab:numerical-parameters}.
{We consider the fractional parameters
$\alpha\in\{1.0,0.9,0.7,0.5,0.3,0.1\}$.}
The polycrystal simulations use $P=60$ initial grains, time step $h=5\cdot10^{-4}$, and $300$ time steps, so the final time is $T=0.15$.  The random Voronoi seeds are 42, 52, 62, 72, and 82.
For comparisons at equal physical time we use the same initial Voronoi tessellation for all values of $\alpha$, with seed $42$ in the displayed snapshots.  For averaged coarsening curves, the mean and one-standard-deviation bands are computed over the five seeds.  The single-grain benchmark uses two phases, the same grid and time step, and an initially circular central inclusion of radius $0.18$.
The additional polycrystal runs use the same five seeds, mesh, step size and
$300$-step horizon. All phase labels and their complete histories are retained,
and runs continue to the prescribed final step without stopping at a detected
quiescent window. The postprocessing assigns a label to each grid cell by
majority vote over its four vertices, with ties resolved by the smallest
label. Phase areas count these cells, $N_{\rm gr}$ counts their positive-area
labels, and $L$ counts the associated periodic grid-edge interfaces.
A quiescent-window start is the first step $q\ge1$ for which the changed-cell
fraction is zero at each of $q,q+1,\ldots,q+4$; its reported time is $t_q=qh$.

\begin{table}[htp!]
\centering
\caption{Parameters used in the periodic finite-element experiments.}
\label{tab:numerical-parameters}
\begin{tabular}{ll}
\toprule
quantity & value \\
\midrule
computational domain & flat unit torus $\T^2$ \\
spatial grid & periodic $200\times200$ unit-square mesh \\
time step & $h=5\cdot10^{-4}$ \\
polycrystal time steps & $300$, so $T=0.15$ \\
polycrystal initial grains & $P=60$ \\
{fractional parameter} & {$\alpha\in\{1.0,0.9,0.7,0.5,0.3,0.1\}$} \\
polycrystal seeds & $42,52,62,72,82$ \\
displayed comparison seed & $42$ \\
{snapshot selection targets} & $N_{\rm gr}\approx50,40,30$ for \Cref{fig:grain-snapshots-matched-active} \\
{statistical selection target} & $N_{\rm gr}\approx30$ for \Cref{fig:grain-normalized-area-hist,fig:grain-conditional-mean-area,fig:grain-vn-fractional} \\
redistribution & minimum-index $\arg\max$ tie-breaking \\
postprocessing & periodic cellwise label grid \\
\bottomrule
\end{tabular}

\end{table}


\subsection{Single-grain benchmark}

The first test evolves a circular grain embedded in a surrounding matrix on the periodic grid.  {This benchmark measures how the fractional order affects grain shrinkage at a fixed time step.}

\Cref{fig:grain-single-grain} reports the computed area of the central grain
for $\alpha\in\{1,0.9,0.7,0.5\}$. The initial continuum
area is $\pi(0.18)^2\simeq0.101788$, while all four exported initial
cell-area values are $A^0=0.0998$. The first recorded zero cell
area occurs at $t=0.028$, $0.0155$, $0.004$, and $0.001$ for
$\alpha=1,0.9,0.7,0.5$, respectively. These are discrete first-zero
times at the fixed sampling step.
At the fixed step $h=5\cdot10^{-4}$, the corresponding smoothing lengths are
approximately $0.022361$, $0.031894$, $0.066244$, and $0.140772$, respectively.
The faster area loss observed for smaller $\alpha$ accompanies an increase
in smoothing length, together with changes in the L1 history weights.
At the first step, $\bar\chi^0=\chi^0$, so its dependence on $\alpha$
is determined by $\ell_{\alpha,h}$.
The local first-step formula \eqref{eq:first-step-displacement} gives, for a
fixed Euclidean disk in $\R^2$ as $\ell/R_0\to0$,
\begin{equation*}
 R_0-R_1=\frac{\ell^2}{2R_0}+O(\ell^3).
\end{equation*}
The expansion applies in the continuum regime $\ell/R_0\to0$.
For the $\alpha=0.5$ run, $\ell/R_0\simeq0.782$, so the smoothing length
is comparable to the initial radius.

\begin{figure}[htp!]
\centering
\includegraphics[page=1,width=.49\textwidth]{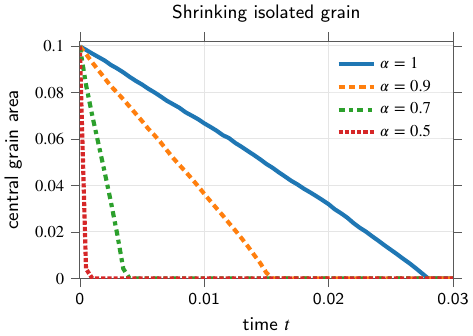}
\caption{{Shrinking isolated-grain benchmark on the periodic $200\times200$
grid, with initial radius $R_0=0.18$ and fixed time step $h=5\cdot10^{-4}$.
The curves report computed inclusion areas for $\alpha\in\{1,0.9,0.7,0.5\}$. All four initial cell areas are
$0.0998$. Both the L1 history and the Helmholtz smoothing length vary across curves.}}
\label{fig:grain-single-grain}
\end{figure}

\subsection{Structural diagnostics for the thresholding step}
\label{subsec:numerics-structural}

The following diagnostics are designed to separate the two places where the
fractional parameter enters the scheme.  Once the pre-diffusion datum $f$ and
the length scale $\ell_{\alpha,h}$ are fixed, the update is the
Helmholtz-threshold map
$f\mapsto
\mathbf 1_{\{(I-\ell_{\alpha,h}^2\Delta)^{-1}f>1/2\}}$.
{The temporal dependence is carried by the L1 history datum}
$f=\bar\chi^{\,n-1}$.

\paragraph{Flat-interface invariance, grid pinning and first-step curvature scaling}
{The test uses the stripe $0.25<x<0.75$, assigned at the grid vertices
with strict inequalities. This nodal assignment treats the boundary vertices
asymmetrically under phase exchange. The left panel of
\Cref{fig:struct-flat-pinning} reports zero changed-cell fraction for the
displayed computation. In the continuum, a half-period stripe is invariant
by reflection symmetry between its complementary phases. For periodic slabs
of other widths, resolvent interactions between the interfaces can affect
invariance. For an unchanged history, the L1 average equals the same
indicator for every fractional order.}

The middle panel of \Cref{fig:struct-flat-pinning} shows a one-step pinning
sweep. {The sweep uses the initial indicator, $\bar\chi^0=\chi^0$, as its
pre-diffusion datum.}  The plot shows the
finite-grid threshold at which the diffuse field begins to cross the level
$1/2$.

The disk test on the right of \Cref{fig:struct-flat-pinning} is another first-step
diagnostic.  Since
the first pre-diffusion datum is exactly the initial indicator, the test probes
the Helmholtz-threshold map.  For a disk of radius $R_0$, the relevant
curvature scale is $1/R_0$, and the measured radius loss is expected to scale
with $\ell_{\alpha,h}^2/R_0$.

\begin{figure}[htp!]
\centering
\includegraphics[page=2,width=.99\textwidth]{images.pdf}
\caption{Structural diagnostics for flat interfaces and grid pinning. {The displayed $\alpha=1$ data represent the reported coincident curves across the tested orders.}
{Left: zero changed-cell fraction for the periodic stripe computation.} Middle: one-step pinning sweep
plotted against $\ell_{\alpha,h}/\Delta x$.  Right: first-step disk diagnostic;
the measured radius loss is plotted against $\ell_{\alpha,h}^2/R_0$.}
\label{fig:struct-flat-pinning}
\end{figure}

\paragraph{Memory-tail diagnostic.}
The test in the left plot of \Cref{fig:struct-memory-tail} uses $n=q+1$, an initial disk
$\chi^0=\chi_{\rm old}$ of radius $0.28$, and $q$ copies of a concentric disk
$\chi^1=\cdots=\chi^q=\chi_{\rm cur}$ of radius $0.18$.
Consequently the history identity gives exactly
\begin{equation*}
 \bar\chi^{\,q}-\chi_{\rm cur}
 =b_q(\chi_{\rm old}-\chi_{\rm cur}),\qquad
 b_q=(q+1)^{1-\alpha}-q^{1-\alpha}.
\end{equation*}
Thus the mean absolute cellwise error is $b_q$ times the mean absolute
difference between the two disk indicators. This constructed history
attains the corresponding memory-tail bound; for a general older history,
\Cref{lem:memory-tail} supplies an upper bound. Smaller orders give a more
persistent older-state contribution in this diagnostic.

\begin{figure}[htp!]
\centering
\includegraphics[page=3,width=.99\textwidth]{images.pdf}
\caption{{Left: Memory-tail diagnostic for the history consisting of one older
larger disk and $q$ copies of the current disk, with $n=q+1$.
Markers show the measured mean absolute history error; lines show $b_q$
times the mean absolute difference of the two indicators. These coincide
for the constructed history. This test directly probes the L1 history average.} {Right: Polycrystal evolution at a common Helmholtz smoothing length
$\ell_{\alpha,h}=\ell_0$. The curves compare the influence of the L1 history
on successive thresholding updates. The order-dependent time steps
$h_\alpha$ define different physical time grids.}}
\label{fig:struct-memory-tail}
\end{figure}

\paragraph{Fixed-smoothing-length comparison.}
The fixed-$\ell$ polycrystal experiment in the right plot of \Cref{fig:struct-memory-tail}
extends the same idea to long-time dynamics.  Here the time step is chosen as
\begin{equation*}
h_\alpha
=
\left(\frac{\ell_0^2}{\Gammafun(2-\alpha)}\right)^{1/\alpha},
\qquad
\Gammafun(2-\alpha)h_\alpha^\alpha=\ell_0^2,
\end{equation*}
{so that every value of $\alpha$ uses the same Helmholtz smoothing length
at each step. Comparing the evolution over successive updates then examines
the dependence on the L1 history weights. The chosen $h_\alpha$ values define
different physical time grids for these comparisons.}

\subsection{Polycrystalline coarsening}

The main experiment starts from random periodic Voronoi data with $60$ initial grains, a standard initial ensemble for statistical grain-growth computations \cite{AndersonSrolovitzGrestSahni1984,SrolovitzAndersonSahniGrest1984,ElseyEsedogluSmereka2011}.  The same periodic Voronoi tessellation is used for all values of $\alpha$ when comparing snapshots at equal physical time.

The averaged curves in \Cref{fig:grain-coarsening-curves} show the number of
active grains, the mean grain area, and the grid-interface length.  In the
fixed-step experiment reported here, decreasing $\alpha$ leads to faster
coarsening: grains disappear earlier, the mean area increases more rapidly, and
{the total grid-interface length declines more rapidly overall. Small
stepwise increases in the recorded grid-interface length occur for each of
the original four orders.} This trend
is consistent with the effective smoothing scale
$\ell_{\alpha,h}=(\Gammafun(2-\alpha)h^\alpha)^{1/2}$.
For the time step used here, $h=5\cdot10^{-4}$, the displayed values of
$\alpha$ lie in a regime where smaller $\alpha$ gives a larger
$\ell_{\alpha,h}$.  Thus, at fixed $h$, changing $\alpha$ changes both the
history weights and the Helmholtz smoothing length.

\begin{figure}[htp!]
\centering
\includegraphics[page=4,width=.99\textwidth]{images.pdf}
\caption{{Coarsening diagnostics for
$\alpha\in\{1.0,0.9,0.7,0.5,0.3,0.1\}$, averaged over the same five Voronoi
seeds at fixed step $h=5\cdot10^{-4}$. Left: active-grain count; middle:
mean grain area; right: grid-interface length. Shaded bands indicate one population standard deviation
across seeds, clipped to the displayed ranges. Time is displayed
on the common shifted logarithmic coordinate $\log(1+t/h)$, with ticks
labelled by physical time. This retains $t=0$ and resolves the rapid
small-order transients within the full interval $[0,0.15]$.}}
\label{fig:grain-coarsening-curves}
\end{figure}

The final-state summary in \Cref{tab:steady-summary} quantifies the same effect at $T=0.15$.  The classical reference $\alpha=1$ still has on average $13.8$ active grains, while $\alpha=0.9$ has on average $6.4$.
{For $\alpha\in\{0.7,0.5,0.3,0.1\}$, all five reported final cell partitions
contain one active grain. The mean first quiescent-window start decreases
across these tested orders in the present fixed-step comparison.
For $\alpha=1$, seed $52$ has a first quiescent
window starting at step $278$ ($t=0.139$), although its final cell partition
still has $13$ grains. A quiescent window can therefore occur in a multigrain configuration.}  

\begin{table}[htp!]
\centering
\caption{{Final-state simulation diagnostics at $T=0.15$, averaged over
five Voronoi seeds for each order. The last column gives the mean start time of the first
five-step window of zero cell-label changes, averaged over the runs in
which such a window is detected.}}
\label{tab:steady-summary}
{Reported grain-growth simulations}\par\smallskip
\begin{tabular}{ccccc}
\toprule
$\alpha$ & mean final $N_{\rm gr}$ & mean final $L$ & quiescent runs & mean first quiescent time \\
\midrule
$1.0$ & $13.8$ & $8.580$ & {$0/5$} & -- \\
$0.9$ & $6.4$ & $5.703$ & $0/5$ & -- \\
$0.7$ & $1.0$ & $0.000$ & $5/5$ & $0.0945$ \\
$0.5$ & $1.0$ & $0.000$ & $5/5$ & $0.0215$ \\
{$0.3$} & {$1.0$} & {$0.000$} & {$5/5$} & {$0.0036$} \\
{$0.1$} & {$1.0$} & {$0.000$} & {$5/5$} & {$0.0019$} \\
\bottomrule
\end{tabular}
\end{table}

\begin{table}[htp!]
\centering
\caption{{Parameters calculated
at $h=5\cdot10^{-4}$ and $\Delta x=1/200$. At step $300$, $b_{299}$ is the
initial-state coefficient and $1-b_1$ the most-recent-state coefficient.}}
\label{tab:steady-summary2}
Parameters calculated from the L1 and resolvent formulas\par\smallskip
\begin{tabular}{ccccc}
\toprule
$\alpha$ & $\ell_{\alpha,h}$ & $\ell_{\alpha,h}/\Delta x$ & $b_{299}$ & $1-b_1$ \\
\midrule
$1.0$ & $0.022361$ & $4.472$ & $0.000000$ & $1.000000$ \\
$0.9$ & $0.031894$ & $6.379$ & $0.000591$ & $0.928227$ \\
$0.7$ & $0.066244$ & $13.249$ & $0.005542$ & $0.768856$ \\
$0.5$ & $0.140772$ & $28.154$ & $0.028892$ & $0.585786$ \\
$0.3$ & $0.304818$ & $60.964$ & $0.126526$ & $0.375495$ \\
$0.1$ & $0.670630$ & $134.126$ & $0.508865$ & $0.133934$ \\
\bottomrule
\end{tabular}
\end{table}

\paragraph{Diagnostics.}
The rows of \Cref{tab:steady-summary} show one active cell-label grain
and zero grid-interface length in all ten final states. The mean first
quiescent-window start is $0.0036$ for $\alpha=0.3$ and $0.0019$ for
$\alpha=0.1$. Across seeds, these start times range over
$[0.0025,0.0045]$ and $[0.0015,0.0025]$, respectively.

The common time coordinate in \Cref{fig:grain-coarsening-curves} resolves
the rapid initial transition alongside the original-order curves.
After the first step, the mean active-grain count has fallen from $60$ to
$42.0$ for $\alpha=0.3$ and to $10.8$ for $\alpha=0.1$.
All five cell partitions have one active grain by step $8$ ($t=0.004$)
and step $4$ ($t=0.002$), respectively. From the following step onward,
every recorded changed-cell fraction is zero through $T=0.15$.
During the $\alpha=0.3$ transient, the recorded changed-cell fraction
varies nonmonotonically.

At fixed $h$, decreasing $\alpha$ changes both the L1 history and the
Helmholtz smoothing length. For the two additional orders, the lengths are
approximately $0.305$ and $0.671$ on the unit torus; see
\Cref{tab:steady-summary2}. The first update acts on $\chi^0$ with the corresponding smoothing length;
later updates use the evolving L1 history. These smoothing lengths are
comparable to the unit domain scale, placing the additional runs in a
large-smoothing regime.

The strong finite-level initial-state contribution is quantified by
\Cref{rem:small-alpha}: at step $300$, it still has weight about
$12.7\%$ for $\alpha=0.3$ and $50.9\%$ for $\alpha=0.1$.
In these runs, one-grain cell partitions form while the initial state
retains substantial history weight, with smoothing and redistribution
applied to the history average at every step. To separate the one-step smoothing scale,
one may hold $\ell_0$ fixed using
$h_\alpha=(\ell_0^2/\Gammafun(2-\alpha))^{1/\alpha}$ and compare update
counts, as in \Cref{fig:struct-memory-tail}.

The equal-time snapshots in \Cref{fig:grain-snapshots-equal-time} provide the corresponding geometric picture for the comparison seed.  At the same physical time, smaller $\alpha$ has already coarsened further in this fixed-step scaling.  {The target-selected snapshots in
\Cref{fig:grain-snapshots-matched-active} show the stored states nearest
the prescribed counts $50,40,30$ for the same seed. The actual counts vary across orders: the displayed
$\alpha=0.7$ counts are $60,25,25$, and the $\alpha=0.5$ counts are
$53,53,8$. Repeated panels result from selecting the same stored state
for two targets. These actual counts identify the coarsening stage represented by each panel.}

\begin{figure}[htp!]
    \centering
\includegraphics[page=5,width=.79\textwidth]{images.pdf}
\caption{Equal-time grain configurations for the same initial Voronoi seed on the periodic grid.  Rows correspond to $\alpha\in\{1.0,0.9,0.7,0.5\}$, and columns correspond to the displayed physical times.}
\label{fig:grain-snapshots-equal-time}
\end{figure}

\begin{figure}[htp!]
\centering
   \includegraphics[page=6,width=.59\textwidth]{images.pdf}
\caption{{Grain configurations for seed $42$, selected from the stored
snapshots nearest the target counts $50,40,30$ (columns). Rows correspond
to $\alpha=1,0.9,0.7,0.5$. Actual counts and times are printed in each
panel; times are rounded to three decimal places. Nearest-state selection can assign the same stored state to two targets;
for example, the $\alpha=0.5$ row has actual counts $53,53,8$.}}
\label{fig:grain-snapshots-matched-active}
\end{figure}

\subsection{\texorpdfstring{{Grain-area and side-number statistics}}{Grain-area and side-number statistics}}

The statistical comparisons in
\Cref{fig:grain-normalized-area-hist,fig:grain-conditional-mean-area,fig:grain-vn-fractional}
use $\alpha\in\{1.0,0.9,0.7,0.3\}$ and the selection target
$N_{\rm gr}=30$. For each order and seed, the stored state nearest this
target is selected. The actual counts range from $29$ to $31$ for
$\alpha=1$, from $27$ to $35$ for $\alpha=0.9$, and from $25$ to $33$
for $\alpha=0.7$. These three orders are therefore approximately matched;
their statistics pool grains from five selected states at different times.
We include $\alpha=0.3$ to examine grain geometry at a smaller fractional
order with a larger smoothing length. Its selected states are all at step $1$, with counts
$40,37,39,46,48$ for seeds $42,52,62,72,82$, giving $210$ pooled grains.
These first-step samples have $37$--$48$ grains per seed and represent an
earlier coarsening stage than the three approximately matched orders.

The normalized-area histograms in \Cref{fig:grain-normalized-area-hist}
describe the empirical distribution of $A_i/\overline A$ in these samples.
The panels use common axes and retain the full exported bin range, including
the larger-area tail for $\alpha=0.3$. Differences involving that order
also reflect the different coarsening stage and first-step smoothing length.

\begin{figure}[htp!]
\centering
\includegraphics[page=7,width=.99\textwidth]{images.pdf}
\caption{{Empirical normalized-area distributions for
$\alpha=1,0.9,0.7,0.3$ in row order (left to right, then top to bottom),
pooling five states per order selected nearest $N_{\rm gr}=30$.
Actual count ranges are $29$--$31$, $27$--$35$, $25$--$33$, and $37$--$48$,
respectively. The $\alpha=0.3$ panel represents an earlier coarsening stage sampled at
the first step. Bin edges and densities are
those of the exported histograms; both axes are common to all four panels.}}
\label{fig:grain-normalized-area-hist}
\end{figure}

{\Cref{fig:grain-conditional-mean-area} shows side-number statistics
for the same retained samples. The conditional mean area generally increases with the number of sides,
with local decreases in some samples. For $\alpha=0.7$, it decreases from
approximately $2.094$ at nine sides (eight grains) to $1.635$ at ten sides
(one grain). The high-side classes contain few grains, as shown by the
accompanying frequency histogram.}

\begin{figure}[htp!]
\centering
\includegraphics[page=8,width=.99\textwidth]{images.pdf}
\caption{{Side-number statistics for $\alpha=1,0.9,0.7,0.3$ using the same
samples as \Cref{fig:grain-normalized-area-hist}.
Left: conditional mean normalized grain area. Right: empirical
side-number frequency. The high-side classes contain few grains.}}
\label{fig:grain-conditional-mean-area}
\end{figure}

\subsection{Fractional von Neumann--Mullins diagnostic}

{This comparison uses $\alpha=1,0.9,0.7,0.3$ and the samples described
above. The first three orders are approximately matched in grain count;
the $\alpha=0.3$ panel displays the first-step sample. At this first step,
\begin{equation*}
 (\partial_h^\alpha A_i)^1
 =\frac{h^{1-\alpha}}{\Gammafun(2-\alpha)}
   \frac{A_i^1-A_i^0}{h}.
\end{equation*}
For $\alpha=0.3$ and the present $h$, the scalar prefactor is approximately
$0.0053813$. At step $1$, the fractional area derivative is therefore a scalar multiple
of the ordinary backward difference, so this panel relates first-step area
changes to side number.}

For each label $i$, let $A_i^m$ denote its discrete area at time step $m$.
The label is kept fixed throughout the simulation history; if the grain has
become extinct by time $m$, we set $A_i^m=0$.  Thus the discrete fractional
area derivative below is computed from the full history of the same label.  In
the scatter plots at time level $n$, we include only labels that are active at
that time, i.e. labels with $A_i^n>0$.

For each active label we compute the discrete fractional area derivative
\begin{equation}\label{eq:num-area-caputo}
(\partial_h^\alpha A_i)^n
=
\frac{1}{\Gammafun(2-\alpha)h^\alpha}
\sum_{k=0}^{n-1} b_k\bigl(A_i^{n-k}-A_i^{n-k-1}\bigr),
\qquad \alpha\in(0,1),
\end{equation}
and use the ordinary backward difference for $\alpha=1$.  We then plot this quantity against $s_i^n-6$, where $s_i^n$ is the discrete side number.  {Conditional on a suitable discrete chain rule for phase indicators
and a representation of the limiting interfacial memory as a Caputo-type
convolution of normal velocity, a formal fractional analogue of the
von Neumann--Mullins law would have the schematic form}
\begin{equation*}
(\partial_t^\alpha A_i)(t) \approx \kappa_\alpha\,\bigl(s_i(t)-6\bigr),
\end{equation*}
for a mobility-dependent constant $\kappa_\alpha$.  {The least-squares slopes in \Cref{fig:grain-vn-fractional} quantify the
empirical relation between the computed fractional area derivatives and the
centered side number $s_i^n-6$.} 

{\Cref{fig:grain-vn-fractional} shows the resulting four scatter plots,
with common axes and least-squares fits. For the $210$ active grains at
$\alpha=0.3$, the fitted slope is approximately $0.06374$.}  {Motivated by the classical von Neumann--Mullins law for two-dimensional
grain growth \cite{vonNeumann1952,Mullins1956}, this experiment examines the
relation between the computed fractional area derivatives and grain side numbers.}  The observed increasing trend shows that grains with more sides tend, on average, to have larger fractional area derivative than grains with fewer sides.

\begin{figure}[htp!]
\centering
\includegraphics[page=9,width=.99\textwidth]{images.pdf}
\caption{{Fractional von Neumann--Mullins diagnostic for
$\alpha=1,0.9,0.7,0.3$ in row order, using the same samples as
\Cref{fig:grain-normalized-area-hist}. Points are individual active grains;
lines are least-squares fits. All four panels use identical axes, with the
vertical range containing all recorded derivatives.}}
\label{fig:grain-vn-fractional}
\end{figure}

\section{Conclusion and outlook}\label{sec:conclusion}

We introduced a history-aware fractional MBO scheme obtained by combining the
L1 discretization of the Caputo derivative with a Helmholtz-resolvent
thresholding mechanism.  {At each step, the Helmholtz resolvent acts on the convex L1 history
average of all previous thresholded states.}  The resulting
Helmholtz step is exactly one implicit L1 subdiffusion step, and the following
thresholding step projects the field back to pure phases.

{The scheme admits a variational characterization of its continuous
diffusion step, together with scalar maximum/comparison principles and
resolvent stability. Combined with the convex L1 history representation and
deterministic thresholding, these properties yield a well-defined
time-discrete construction. The memory-tail estimate quantifies the
influence of older states and yields a finite-dimensional pinning condition
under a contractive discrete resolvent. The simulations exhibit faster
coarsening for smaller fractional orders in the tested fixed-step regime,
where both history and smoothing vary. Comparisons at fixed smoothing
length examine the effect of the L1 history across successive updates on
the associated order-dependent time grids.}

{An important next step is to identify the limiting geometric law as
$h\downarrow0$. For interfaces with memory, defining a geometric term such as
$\mathcal I^{1-\alpha}V$ requires a transport representation of the
interfacial history. A convergence theory must combine this representation
with compactness, a no-loss condition for interfacial area, and identification
of the limiting memory term.}

Another natural direction is to
compare the present threshold-dynamics construction with diffuse-interface
models based on time-fractional Allen--Cahn equations \cite{DuYangZhou2020}.  A sharp-interface analysis connecting such Allen--Cahn dynamics with
memory-dependent interface motion would be complementary to the
thresholding approach considered here. Other possible extensions include volume-preserving fractional thresholding,
anisotropic kernels and surface tensions, adaptive choices of the time step, and
a variational convergence theory for the fractional minimizing-movement
structure suggested by the L1 history average.  

\setlength{\bibsep}{0.1\baselineskip}
\small
\bibliographystyle{abbrv}
\bibliography{references}

\end{document}